\definecolor{codegray}{rgb}{0.95,0.95,0.95}
\definecolor{pykeyword}{rgb}{0.13,0.13,1}
\definecolor{pystring}{rgb}{0.58,0,0.82}
\lstdefinestyle{pythonstyle}{
    backgroundcolor=\color{codegray},
    language=Python,
    basicstyle=\ttfamily\small,
    keywordstyle=\color{pykeyword}\bfseries,
    stringstyle=\color{pystring},
    commentstyle=\color{gray},
    showstringspaces=false,
    numbers=left,
    numberstyle=\tiny,
    frame=single,
    breaklines=true,
    tabsize=4,
}
\numberwithin{equation}{section}
\theoremstyle{plain}
\newtheorem{theorem}{Theorem}
\newtheorem{defn}[theorem]{Definition}
\newtheorem{coro}[theorem]{Corollary}
\newtheorem{lemma}[theorem]{Lemma}
\newtheorem{prop}[theorem]{Proposition}
\newtheorem{remark}[theorem]{Remark}
\begin{document}
\title{Mutual visibility in Moore graphs and $(d,2)$-graphs with defect} 
\author{Tonny K B}
\address{Tonny K B, Department of Mathematics, College of Engineering Trivandrum, Thiruvananthapuram, Kerala, India, 695016.}
\email{tonnykbd@cet.ac.in}
\author{Shikhi M}
\address{Shikhi M, Department of Mathematics, College of Engineering Trivandrum, Thiruvananthapuram, Kerala, India, 695016.}
\email{shikhim@cet.ac.in}
\begin{abstract}
The concept of mutual visibility in a graph encodes combinatorial information about vertex subsets with prescribed visibility properties and serves as a useful algebraic invariant. In this paper, we derive algebraic conditions for the mutual-visibility number of $(d,2)$-graphs with non-negative defect. We then determine this parameter for $(d,2,-2)$-graphs for $d=3$ and $4$, and establish an upper bound for $d=5$. In the case $\delta=0$, that is, for Moore graphs of diameter $2$, we focus on the
Hoffman--Singleton graph. We establish an upper bound of $20$ for its mutual-visibility number and subsequently employ an integer programming approach to show that this bound is tight. As a corollary, we deduce that the maximum size of an induced matching in the Hoffman--Singleton graph is $10$.
\end{abstract}
\subjclass[2010]{05C30, 05C35}
\keywords{mutual-visibility set, Moore graph, Defect, Hoffman-Singleton graph}
\maketitle
\section{Introduction}
 Let $G(V,E)$ be a simple graph and let $X\subseteq V$.  Two vertices $u,v\in V$ are said to be $X$-visible \cite{MV_3} if there exists a shortest path $P$ from $u$ to $v$ such that the internal vertices of $P$ do not belong to $X$; that is, $V(P)\cap X \subseteq \{u, v\}$. A set $X$ is called a mutual-visibility set \cite{Stefano} of $G$ if every pair of vertices in $X$ is $X$-visible. The maximum size of such a set in $G$ is referred to as the mutual-visibility number, denoted by $\mu(G)$. The visibility  polynomial of $G$, denoted by $\mathcal{V}(G)$, is defined in \cite{sandi} as $
   \mathcal{V}(G) = \sum_{i \geq 0} r_i x^{i}$, 
where $r_i$ denotes the number of mutual-visibility sets of $G$ having cardinality $i$.

The notion of mutual-visibility in graphs has received increasing attention because of its significance across both theoretical and practical domains. Wu and Rosenfeld first explored the foundational visibility problems in the context of pebble graphs~\cite{Geo_convex_1, Geo_convex_2}. Later, Di Stefano introduced the formal definition of mutual-visibility sets in graph-theoretic terms~\cite{Stefano}. The concept of mutual-visibility serves as a powerful tool for analyzing the transmission of information, influence, or coordination under topological constraints. This graph-theoretic framework has been the subject of numerous investigations~\cite{MV_1, MV_2, MV_5, MV_6, MV_9}, and several variants of mutual visibility have also been proposed in ~\cite{MV_10}. 

In practical applications, mutual visibility is significant in robotics. In multi-agent systems, robots must often reposition themselves to ensure unobstructed visibility between all agents, facilitating decentralized control algorithms for formation, navigation, and surveillance in unknown or dynamic environments. For detailed studies, see~\cite{robotics2,robotics4,robotics5,robotics7}.
 
This paper examines the mutual-visibility number of $(d,2)$-graphs with non-negative defect. In Section~\ref{P6.sec3}, relevant algebraic tools are developed to analyse the size of mutual-visibility sets in such graphs. The mutual-visibility number is determined for $(d,2,-2)$-graphs for $d=3$ and $4$, and an upper bound is established for $d=5$.  In Section~\ref{P6.sec4}, the case $\delta=0$, corresponding to Moore graphs of diameter $2$, is considered, with particular emphasis on the Hoffman--Singleton graph. An upper bound of $20$ for its mutual-visibility number is established, and an integer programming approach is used to show that this bound is tight. As a corollary, it is deduced that the maximum size of an induced matching in the Hoffman--Singleton graph is $10$.  These results illustrate how algebraic and optimisation techniques can be combined to determine precise visibility parameters in graph families. 
 
\section{Notations and preliminaries}
 In this paper, $G(V,E)$ represents an undirected simple connected graph with vertex set $V$ and edge set $E$. A sequence of distinct vertices $(u_0,u_1,u_2,\ldots,u_{n})$ is referred to as a $(u_0,u_n)$\emph{-path} in a graph $G$ if $u_iu_{i+1}\in E(G)$, $\forall i\in\{0,1,\ldots,(n-1)\}$. A \emph{cycle} in a graph $G$ is a $(u_0,u_n)$-path together with an edge $u_0u_n$. If the graph $G$ itself is a cycle, it is denoted by $C_n$. A graph $G$ is \emph{triangle-free}
if there does not exist any cycle of length $3$ in $G$.  \emph{Mantel's theorem} states that a simple graph with $n$ vertices that contains no triangles has at most $
\left\lfloor n^2/4 \right\rfloor$ edges.

Let $X$ be a set. The cardinality of $X$ is denoted by $|X|$. For a vertex $w\in V(G)$, let $N(w)$ denote the set of vertices adjacent to $w$, and let $d_G(w)=|N(w)|$ denote the degree of $w$ in $G$. We denote the set of vertices at distance $2$ from $w$ by $N_2(w)$. The maximum degree of a vertex in $G$ is denoted by $\Delta(G)$, or simply by $\Delta$ when no ambiguity arises. A graph $G$ is said to be \emph{$d$-regular} if every vertex of $G$ has degree $d$. A $d$-regular graph $G$ on $n$ vertices is said to be \emph{strongly regular} if every pair of adjacent vertices has exactly $\lambda$ common neighbours and every pair of non-adjacent vertices has exactly $\mu$ common neighbours. Such a graph is denoted by $\operatorname{srg}(n,d,\lambda,\mu)$.

A graph $G$ with maximum degree $d$ and diameter $k$ is called $(d,k)$\emph{-graph}. The maximum possible order of such a graph is $M(d,k)=1+d\sum_{i=0}^{k-1}(d-1)^i$,
which is called the \emph{Moore bound}. A graph attaining this bound is called a \emph{Moore graph}, and such graphs are necessarily $d$-regular and contain no cycles of length less than five ~\cite{hoffman_moore}. Moore graphs are extremely rare. For $k=1$ and $d\ge 1$, they are precisely the complete graphs on $d+1$ vertices. For $k=2$ and $d\ge 1$, Moore graphs exist only for $d=2,3,7$ and possibly $57$~\cite{hoffman_moore}. For $k\ge 3$ and $d=2$, they are precisely the cycles of length $2k+1$. Furthermore, for $k\ge 3$ and $d\ge 3$, Moore graphs do not exist; see~\cite{BanIto1973, Dam1973}. A detailed account of this topic can be found in~\cite{ MillerSiran2013}. Since Moore graphs are rare, it is natural to study graphs whose order is close to the Moore bound. Let $G$ be a $(d,k)$-graph of order $n$. Then the \emph{defect} of $G$ is defined by $\delta = M(d,k) - n \geq 0$, and $G$ is called a \emph{$(d,k,-\delta)$-graph}. If $\delta < 1+(d-1)+(d-1)^2+\cdots+(d-1)^{k-1}$ then the graph is regular \cite{FerMilVil2011}. Regular $(d,2,-\delta)$-graphs having $n$ vertices satisfy the matrix equation $
A^{2}+A-(d-1)I=J+B$,
where $A$ denotes the adjacency matrix of the graph, $J$ the $n\times n$ all-ones matrix, $I$ the identity matrix, and $B$ is a matrix whose row and column sums are equal to $\delta$ \cite{MillerSiran2013, ConGim09}.
  
A \emph{matching} in a graph $G(V,E)$ is a set of edges $M \subseteq E$ such that no two edges in $M$ share a common endpoint. An \emph{induced matching} \cite{induced_match_1} in $G$ is a matching $M$ in $G$ such that no two edges of $M$ are joined by an edge of $G$. If an induced matching consists of exactly $k$ edges, then it is referred to as an \emph{induced $k$-matching}. For a subset $S \subseteq V$, $G[S]$ denotes the induced subgraph of $S$ in $G$. Let $S,T\subseteq V$. Then $e(S)$ denotes the number of edges of $G[S]$ and $e(S,T)$ denotes the number of edges of $G$ having one end in $S$ and the other end in $T$.

A function $f : I \subseteq \mathbb{R} \to \mathbb{R}$ defined on an interval $I$ is said to be \emph{convex} if, for all $a, b \in I$ and $\lambda \in [0,1]$, 
$f(\lambda a + (1-\lambda)b) \leq \lambda f(a) + (1-\lambda)f(b)$; it is said to be \emph{concave} if the inequality is reversed.

\emph{Jensen's inequality }(see \cite{Cvetkovski2012}): Let $f:\,I\to\mathbb{R}$ be a convex function on an interval $I\subseteq\mathbb{R}$. 
For any integer $m\ge 1$ and any points $x_1, x_2, \dots, x_m\in I$ and nonnegative weights
$\alpha_1, \alpha_2, \dots,\alpha_m$ with $\sum_{i=1}^m\alpha_i=1$, we have
\begin{equation}\label{P6.eq3}
f\Biggl(\sum_{i=1}^m \alpha_i x_i\Biggr)
\le \sum_{i=1}^m \alpha_i f(x_i)  
\end{equation}
In particular, for equal weights $\alpha_i=\tfrac1m$,
\[
f\!\Bigl(\tfrac{1}{m}\sum_{i=1}^m x_i\Bigr)
\le \tfrac{1}{m}\sum_{i=1}^m f(x_i).
\]
If strict inequality holds in \eqref{P6.eq3}, then we say that $f$ is strictly convex. If $f$ is strictly convex and $m \geq 2$, then equality holds in Jensen's inequality if and only if $x_1=x_2=\cdots=x_m$.

\section{Mutual-visibility in $(d,2,-\delta)$-graphs}\label{P6.sec3}
In this section, we study mutual-visibility sets in $(d,2,-\delta)$-graphs, where $\delta\ge 0$ denotes the defect. We first establish general bounds and structural properties of such sets, and then specialize to the case $\delta=0$ to recover and refine results for Moore graphs of diameter $2$. We also determine the mutual-visibility number for three of the four known graphs with defect $2$ and derive an upper bound for the remaining one.
\begin{lemma}\label{P6.lem1}
Let $G$ be a graph of diameter $2$. A subset $S \subseteq V(G)$ is a mutual-visibility set of $G$ if and only if, for each non-adjacent pair $\{u,v\}\subseteq S$, there is a vertex $w\in V(G)\setminus S$ that is adjacent to both $u$ and $v$.
\end{lemma}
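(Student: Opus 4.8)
The plan is to exploit the diameter-$2$ hypothesis to collapse the visibility condition into a statement purely about common neighbours. The guiding observation is that in a graph with $diam(G)=2$, any two distinct vertices lie at distance $1$ (adjacent) or $2$ (non-adjacent); consequently every shortest path has length at most $2$ and therefore at most one internal vertex. This is precisely the feature that makes the characterisation so clean, so I would isolate it at the outset and let both implications fall out of it.

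First I would handle the adjacent pairs. If $u,v\in S$ are adjacent, then the edge $uv$ is itself a shortest $(u,v)$-path, and it has no internal vertices; hence $V(P)\cap S\subseteq\{u,v\}$ holds trivially and the pair is $S$-visible regardless of $S$. This shows that the visibility status of $S$ is governed entirely by its non-adjacent pairs. Next I would treat a non-adjacent pair $u,v\in S$. Since $d_G(u,v)=2$, every shortest $(u,v)$-path has the form $u\,w\,v$ for some common neighbour $w$ of $u$ and $v$, and its unique internal vertex is exactly that $w$. Thus the pair $u,v$ is $S$-visible if and only if at least one such common neighbour satisfies $w\notin S$, i.e. if and only if $u$ and $v$ admit a common neighbour lying in $V(G)\setminus S$.

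Combining the two observations would then give both directions simultaneously. For the forward implication, assuming $S$ is a mutual-visibility set, each non-adjacent pair in $S$ is $S$-visible, which by the previous paragraph forces the existence of a common neighbour in $V(G)\setminus S$. For the converse, the stated hypothesis supplies such an external common neighbour for every non-adjacent pair, rendering every non-adjacent pair $S$-visible, while every adjacent pair is automatically $S$-visible; hence all pairs in $S$ are $S$-visible and $S$ is a mutual-visibility set. I do not expect a genuine obstacle in this argument: the entire substance lies in recognising that diameter $2$ reduces the candidate shortest paths between non-adjacent vertices to the two-step paths through common neighbours, after which both directions are immediate and require no further computation.
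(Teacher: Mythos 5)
Your proposal is correct and follows essentially the same argument as the paper: diameter $2$ forces every non-adjacent pair in $S$ to be at distance $2$, so the shortest paths are exactly the two-step paths through common neighbours, and $S$-visibility of such a pair is equivalent to having a common neighbour outside $S$. Your explicit treatment of adjacent pairs (trivially $S$-visible via the edge itself) is a small point the paper leaves implicit, but the substance is identical.
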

\begin{proof}
Let $S \subseteq V(G)$. If $u,v \in S$ are non-adjacent, then $d_G(u,v)=2$. Thus, every shortest $(u,v)$-path has exactly one internal vertex, which is a common neighbour of $u$ and $v$. Consequently, $u$ and $v$ are $S$-visible if and only if at least one of their common neighbours lies in $V(G)\setminus S$. Hence $S$ is a mutual-visibility set of $G$ if and only if for every non-adjacent pair $\{u,v\}\subseteq S$ there exists a common neighbour $w \notin S$.
\end{proof}

\begin{lemma}\label{P6.lem2}
Let $G$ be a $(d,2,-\delta)$-graph, and let $u\in V(G)$. Then
\[
\sum_{v\in N_2(u)}\bigl(|N(u)\cap N(v)|-1\bigr)
\le
\delta-d\bigl(d-d_G(u)\bigr)-2e\bigl(N(u)\bigr).
\]
Moreover, if $G$ is regular, then
\begin{equation}\label{P6.eq5}
\sum_{v\in N_2(u)}\bigl(|N(u)\cap N(v)|-1\bigr)
=
\delta-2e\bigl(N(u)\bigr).   
\end{equation}
\end{lemma}

\begin{proof}
Let $u \in V(G)$. Since $G$ has diameter $2$, every vertex outside
$N[u]$ lies at distance exactly 2 from $u$. Hence $|N_2(u)|=|V(G)|-1-d_G(u)=d^2-\delta-d_G(u)$.

We count the edges between $N(u)$ and $N_2(u)$ in two ways. On the one hand,
for each $v\in N_2(u)$, the number of neighbours of $v$ in $N(u)$ is
$|N(u)\cap N(v)|$, and hence
\[
e\bigl(N(u),N_2(u)\bigr)=\sum_{v\in N_2(u)} |N(u)\cap N(v)|.
\]
On the other hand, for each $x\in N(u)$, among the $d_G(x)-1$ neighbours of
$x$ other than $u$, exactly $d_{G[N(u)]}(x)$ lie in $N(u)$, and the remaining
ones lie in $N_2(u)$. Therefore
\[
e\bigl(N(u),N_2(u)\bigr) = \sum_{x\in N(u)}\left[ (d_G(x)-1)- d_{G[N(u)]}(x) \right]
=
\left[\sum_{x\in N(u)}(d_G(x)-1)\right]-2e\bigl(N(u)\bigr) \leq d_G(u)(d-1)-2e\bigl(N(u)\bigr).
\]
Thus
\[
\sum_{v\in N_2(u)}\bigl(|N(u)\cap N(v)|-1\bigr)
=
e\bigl(N(u),N_2(u)\bigr)-|N_2(u)|
\]
\[
\le d_G(u)(d-1)-2e\bigl(N(u)\bigr)-(d^2-\delta-d_G(u))
=
\delta-d(d-d_G(u))-2e\bigl(N(u)\bigr),
\]
which proves the first inequality. If $G$ is $d$-regular, then
\[
\sum_{x\in N(u)}(d_G(x)-1)=d(d-1)
\quad\text{and}\quad
|N_2(u)|=d^2-\delta-d=d(d-1)-\delta,
\]
so equality holds, giving $
\sum_{v\in N_2(u)}\bigl(|N(u)\cap N(v)|-1\bigr)
=\delta-2e\bigl(N(u)\bigr)$. This completes the proof.
\end{proof}

The quantity $\sum_{v\in N_2(u)}\bigl(|N(u)\cap N(v)|-1\bigr)$ measures the total excess in the number of shortest $(u,v)$-paths of length $2$, over all vertices $v$ at distance $2$ from $u$. Thus, the identity \eqref{P6.eq5} shows that the global defect $\delta$ is exactly distributed between the local clustering at $u$, captured by $2e(N(u))$, and the excess multiplicity of shortest $(u,v)$-paths for $v\in N_2(u)$. In particular, when $\delta=2$, the identity \eqref{P6.eq5} imposes strong local restrictions. Since the left-hand side is non-negative, we obtain $e\bigl(N(u)\bigr)\le 1$ for every $u\in V(G)$, so the neighbourhood of any vertex contains at most one edge. If $e\bigl(N(u)\bigr)=1$, then the left-hand side vanishes, and hence $|N(u)\cap N(v)|=1$ for all $v\in N_2(u)$, implying that $u$ lies in no $4$-cycle. If $e\bigl(N(u)\bigr)=0$, then the left-hand side equals $2$, and thus either there is a unique vertex $v\in N_2(u)$ with $|N(u)\cap N(v)|=3$, or there are exactly two vertices $v_1,v_2\in N_2(u)$ with $|N(u)\cap N(v_i)|=2, \ i=1,2$. In particular, $u$ lies in exactly three or two distinct $4$-cycles, respectively. These constraints severely restrict the local structure of $(d,2,-2)$-graphs and are useful in ruling out potential configurations by inspecting the neighbourhood of each vertex.

The identity \eqref{P6.eq5} can also be viewed as a consequence of the matrix equation
$A^{2}+A-(d-1)I=J+B$. Indeed, for distinct vertices $u$ and $v$, the $(u,v)$-entry of this equation gives $B_{uv}=|N(u)\cap N(v)|+A_{uv}-1$.
In particular, if $v\in N(u)$, then $B_{uv}=|N(u)\cap N(v)|$, while if $v\in N_2(u)$, then $B_{uv}=|N(u)\cap N(v)|-1$. Summing the $u$-th row of $B$ over $N(u)$ and $N_2(u)$ yields
\[
\sum_{v\in N(u)}B_{uv}=\sum_{v\in N(u)} d_{G[N(u)]}(v)=2e(N(u))
\quad\text{and}\quad
\sum_{v\in N_2(u)}B_{uv}
=
\sum_{v\in N_2(u)}\bigl(|N(u)\cap N(v)|-1\bigr).
\]
Since each row sum of $B$ is equal to $\delta$ and $B_{uu}=0$, the identity \eqref{P6.eq5} follows by decomposing the $u$-th row sum of $B$ into these two contributions.
\begin{coro}\label{P6.coro5}
Let $G$ be a $(d,2,-\delta)$-graph. If $G$ is triangle-free, then
\[
\sum_{v\in N_2(u)}\bigl(|N(u)\cap N(v)|-1\bigr)
\le
\delta-d\bigl(d-d_G(u)\bigr).
\]
If, in addition, $G$ is regular, then 
\[\sum_{v\in N_2(u)}\bigl(|N(u)\cap N(v)|-1\bigr)
=
\delta.\]
\end{coro}

\begin{lemma}\label{P6.lem3}
Let $G$ be a graph of diameter $2$, and let $S \subseteq V(G)$ be a
mutual-visibility set of size $s$. Then:
\begin{enumerate}
    \item[(i)] $ \displaystyle
    \binom{s}{2} - e(S) \leq \sum_{w \in V(G)\setminus S}
    \binom{|N(w)\cap S|}{2}$, 
    where $e(S)$ denotes the number of edges in the subgraph $G[S]$.

    \item[(ii)] Moreover, if $G$ is a regular $(d,2,-\delta)$-graph, then
    \[
    \sum_{w \in V(G)\setminus S}\binom{|N(w)\cap S|}{2}
    \le
    \binom{s}{2}-e(S)
    +\frac12\sum_{u\in S}\bigl(\delta-2e(N(u))\bigr)
    +\sum_{\{u,v\}\in\mathcal A} m(u,v),
    \]
    where $\mathcal A=\bigl\{\{u,v\}\subseteq S:\ uv\in E(G)\bigr\}$,
    and for each $\{u,v\}\in\mathcal A$, $m(u,v)$ denotes the number of common
    neighbours of $u$ and $v$ that lie in $V(G)\setminus S$.
\end{enumerate}
\end{lemma}
 \begin{proof}
 Let $\mathcal{U} = \bigl\{\{u,v\}\subseteq S: uv\notin E(G)\bigr\}$ be the set of unordered non-adjacent pairs of vertices in $S$. Then $|\mathcal{U}| = \binom{s}{2}-e(S)$. By Lemma~\ref{P6.lem1},  for each $\{u,v\}\in\mathcal{U}$ there exists at least one vertex $w \in V(G)\setminus S$ such that $u,v\in N(w)$. For each $w\in V(G)\setminus S$, define $\mathcal{F}(w) = \bigl\{\{u,v\}\subseteq S : u,v \in N(w)\bigr\}$.
Then, $|\mathcal{F}(w)|=\binom{|N(w)\cap S|}{2}$.  Therefore
\[
|\mathcal{U}|  \leq \sum_{w\in V(G)\setminus S} |\mathcal{F}(w)|
=  \sum_{w\in V(G)\setminus S} \binom{|N(w)\cap S|}{2}.
\]
Since $|\mathcal{U}|=\binom{s}{2}-e(S)$, the desired inequality follows,
which proves~(i).

\smallskip
Now assume that $G$ is a regular $(d,2,-\delta)$-graph. 
Let $\mathcal{A}=\bigl\{\{u,v\}\subseteq S:\ uv\in E(G)\bigr\}$. For each pair $\{u,v\}\subseteq S$, let $m(u,v)$ denote the number of common neighbours of $u$ and $v$ in $V(G)\setminus S$. Then,
\begin{equation}\label{P6.eq10}
  \sum_{w\in V(G)\setminus S}\binom{|N(w)\cap S|}{2}
=
\sum_{\{u,v\}\in\mathcal{U}} m(u,v)
+
\sum_{\{u,v\}\in\mathcal{A}} m(u,v)=|\mathcal{U}|+\sum_{\{u,v\}\in\mathcal{U}}(m(u,v)-1)+\sum_{\{u,v\}\in\mathcal{A}} m(u,v)  
\end{equation}
Note that both sides of equation~\eqref{P6.eq10} count the number of triples $(u,v,w)$ such that
$\{u,v\}\subseteq S$, $w\in V(G)\setminus S$, and $w$ is a common neighbour of
$u$ and $v$. Fix a vertex $u\in S$. Since $G$ has diameter $2$, every vertex $v\in S$ with
$uv\notin E(G)$ lies in $N_2(u)$. Also, every common neighbour of $u$ and $v$
lying in $V(G)\setminus S$ is in particular a common neighbour of $u$ and $v$
in $G$. Hence $m(u,v)\le |N(u)\cap N(v)|$. Thus
\[
\sum_{\substack{v\in S\\ uv\notin E(G)}}(m(u,v)-1)
\le
\sum_{\substack{v\in S\\ uv\notin E(G)}}
\bigl(|N(u)\cap N(v)|-1\bigr)
\le
\sum_{v\in N_2(u)}\bigl(|N(u)\cap N(v)|-1\bigr).
\]
Lemma~\ref{P6.lem2} gives
\[
\sum_{\substack{v\in S\\ uv\notin E(G)}}(m(u,v)-1)
\le
\delta-2e(N(u)).
\]

Summing over all $u\in S$, we obtain
\[
\sum_{u\in S}\sum_{\substack{v\in S\\ uv\notin E(G)}}(m(u,v)-1)
\le
\sum_{u\in S}\bigl(\delta-2e(N(u))\bigr).
\]
Each unordered pair $\{u,v\}\in\mathcal{U}$ is counted exactly twice on the
left-hand side, and hence
\[
2\sum_{\{u,v\}\in\mathcal{U}}(m(u,v)-1)
\le
\sum_{u\in S}\bigl(\delta-2e(N(u))\bigr).
\]
Therefore
\[
\sum_{\{u,v\}\in\mathcal{U}}(m(u,v)-1)
\le
\frac12\sum_{u\in S}\bigl(\delta-2e(N(u))\bigr).
\]

From \eqref{P6.eq10}, it follows that
\begin{equation}\label{P6.eq7}
\sum_{w\in V(G)\setminus S}\binom{|N(w)\cap S|}{2}
\leq
\binom{s}{2}-e(S)
+\frac12\sum_{u\in S}\bigl(\delta-2e(N(u))\bigr)
+\sum_{\{u,v\}\in\mathcal{A}} m(u,v)
\end{equation}
\end{proof}   
\begin{coro}\label{P6.coro3}
Under the hypotheses of Lemma~\ref{P6.lem3}(ii), if $G$ is triangle-free, then
\[
\sum_{w \in V(G)\setminus S}\binom{|N(w)\cap S|}{2}
\le
\binom{s}{2}-e(S)+\frac{\delta s}{2}.
\]
If, in addition, $\delta=0$, then
\begin{equation}\label{P6.eq8}
\binom{s}{2}-e(S)
=
\sum_{w \in V(G)\setminus S}\binom{|N(w)\cap S|}{2}.    
\end{equation}
\end{coro}

\begin{proof}
If $G$ is triangle-free, then $e(N(u))=0$ for every $u\in S$, and no adjacent
pair of vertices has a common neighbour. Hence $m(u,v)=0$ for every $\{u,v\}\in\mathcal A$. Therefore, Lemma~\ref{P6.lem3}(ii) reduces to
\[
\sum_{w \in V(G)\setminus S}\binom{|N(w)\cap S|}{2}
\le
\binom{s}{2}-e(S)+\frac12\sum_{u\in S}\delta
=
\binom{s}{2}-e(S)+\frac{\delta s}{2}.
\]
If $\delta=0$, the last term vanishes, and part (i) of Lemma~\ref{P6.lem3}  gives the reverse
inequality. Hence equality holds.

\end{proof}
\begin{coro}\label{P6.coro4}
Under the hypotheses of Lemma~\ref{P6.lem3}(i), if $G$ is triangle-free and every non-adjacent pair $\{u,v\} \subseteq S$ has a unique common neighbour in $V(G)\setminus S$, then \eqref{P6.eq8} holds.
\end{coro}
\begin{proof}
Since $G$ is triangle-free, we have $m(u,v)=0$ for every $\{u,v\}\in\mathcal A$.
Moreover, every non-adjacent pair $\{u,v\} \subseteq S$ has a unique common
neighbour in $V(G)\setminus S$, and hence $m(u,v)=1$ for every
$\{u,v\}\in\mathcal U$. Thus, by \eqref{P6.eq7},
\[
\sum_{w\in V(G)\setminus S}\binom{|N(w)\cap S|}{2}
=
|\mathcal U|
+\sum_{\{u,v\}\in\mathcal U}(m(u,v)-1)
+\sum_{\{u,v\}\in\mathcal A}m(u,v)
=
|\mathcal U|
=
\binom{s}{2}-e(S).
\]
This completes the proof.
\end{proof}
\begin{lemma}\label{P6.lem7}
Let $G$ be a graph, let $S\subseteq V(G)$, and put $T=V(G)\setminus S$. Then
\[
e(S,T) \;=\; \sum_{v\in S}\deg_G(v)-2e(S)
\;=\; \sum_{v\in S}\bigl(\deg_G(v)-\deg_{G[S]}(v)\bigr).
\]
In particular, if $G$ is $d$-regular, then $e(S,T)=d|S|-2e(S)$.
\end{lemma}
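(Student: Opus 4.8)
The plan is to prove both equalities simultaneously by a single degree-counting argument, and then specialize to the regular case. The key observation is that every edge of $G$ incident to a vertex $v\in S$ falls into exactly one of two classes: either its other endpoint also lies in $S$, in which case it is an edge of the induced subgraph $G[S]$, or its other endpoint lies in $T=V(G)\setminus S$, in which case it is one of the crossing edges counted by $e(S,T)$. This yields, for each individual vertex $v\in S$, the local decomposition $\deg_G(v)=\deg_{G[S]}(v)+c(v)$, where $c(v)$ denotes the number of neighbours of $v$ lying in $T$.

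Next I would sum this identity over all $v\in S$. The left-hand side becomes $\sum_{v\in S}\deg_G(v)$. The first term on the right becomes $\sum_{v\in S}\deg_{G[S]}(v)$, which by the handshaking lemma applied to the graph $G[S]$ equals $2e(S)$. The second term, $\sum_{v\in S}c(v)$, equals $e(S,T)$; here the crucial point---and the only step that warrants a moment's care---is that each crossing edge is counted exactly once rather than twice, because a crossing edge has precisely one endpoint in $S$ (its other endpoint lying in $T$), in contrast to an internal edge of $G[S]$, which contributes to the degree of both of its $S$-endpoints. Rearranging the resulting identity $\sum_{v\in S}\deg_G(v)=2e(S)+e(S,T)$ gives the first claimed equality, and substituting back $2e(S)=\sum_{v\in S}\deg_{G[S]}(v)$ produces the second form $e(S,T)=\sum_{v\in S}\bigl(\deg_G(v)-\deg_{G[S]}(v)\bigr)$.

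Finally, for the regular specialization I would simply set $\deg_G(v)=d$ for every $v\in S$, so that $\sum_{v\in S}\deg_G(v)=d\,|S|$, and the first equality collapses at once to $e(S,T)=d|S|-2e(S)$. I do not anticipate any genuine obstacle: the whole argument is a routine application of the handshaking lemma combined with the partition of edges incident to $S$ into internal and crossing edges, and the lone subtlety worth spelling out explicitly is the single-versus-double counting of crossing edges versus internal edges.
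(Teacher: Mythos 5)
Your proposal is correct and follows essentially the same argument as the paper: the local decomposition $\deg_G(v)=\deg_{G[S]}(v)+c(v)$ (the paper calls your $c(v)$ by the name $t(v)$), summation over $S$, the handshaking lemma applied to $G[S]$, and the observation that each crossing edge is counted exactly once in $\sum_{v\in S}c(v)$. Nothing is missing, and the specialization to the $d$-regular case is handled identically.
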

\begin{proof}
For every $v\in S$, let $t(v)$ count the edges joining $v$ to the vertices in $T$. Then for each vertex $v$ belonging to $S$, $\deg_G(v)=\deg_{G[S]}(v)+t(v)$. Summing this identity over all \(v\in S\) yields
\[
\sum_{v\in S}\deg_G(v)=\sum_{v\in S}\deg_{G[S]}(v)+\sum_{v\in S}t(v).
\]
By the Handshaking Lemma applied to the induced subgraph \(G[S]\), we have $\sum_{v\in S}\deg_{G[S]}(v)=2e(S)$.
Moreover, the sum \(\sum_{v\in S}t(v)\) counts each edge with one endpoint in \(S\) and other in \(T\) exactly once, which implies that
\[
\sum_{v\in S}t(v)=e(S,T).
\]
Therefore, $
\sum_{v\in S}\deg_G(v)=2e(S)+e(S,T)$, 
and rearranging yields
\[
e(S,T)=\sum_{v\in S}\deg_G(v)-2e(S)=\sum_{v\in S}\bigl(\deg_G(v)-\deg_{G[S]}(v)\bigr).
\]
If \(G\) is \(d\)-regular then \(\sum_{v\in S}\deg_G(v)=d|S|\) and hence \(e(S,T)=d|S|-2e(S)\).
\end{proof}
\begin{theorem}\label{P6.th1} Let $G$ be a $(d,2)$-graph of order $n\geq 2$, where $d\geq 2$. If $S \subseteq V(G)$ is a mutual-visibility set of size $s$, then $ s^{2} + (d^{2}-2d-1)s - d(d-1)n \leq 0$,
and consequently
\[
0 \leq s \leq
\left\lfloor
\frac{ -(d^{2}-2d-1)
+ \sqrt{(d^{2}-2d-1)^{2} + 4d(d-1)n} }{2}
\right\rfloor.
\]
\end{theorem}
\begin{proof}
Since $0 \le |N(w)\cap S| \le d_G(w) \le d$, we have $
\binom{|N(w)\cap S|}{2} \le \binom{d}{2}\quad\text{for every } w\notin S$. Summing over the $n-s$ vertices outside $S$ yields
\begin{equation}\label{P6.eq9}
\sum_{w\notin S}\binom{|N(w)\cap S|}{2}
\le \sum_{w\notin S}\binom{d}{2}
= (n-s)\binom{d}{2}.    
\end{equation}
By Lemma~\ref{P6.lem3}(i) and the inequality above,
\[
\binom{s}{2} - e(S) \le \sum_{w\notin S}\binom{|N(w)\cap S|}{2}
\le (n-s)\binom{d}{2}
\implies
\binom{s}{2} \le (n-s)\binom{d}{2} + e(S).
\]
By the handshaking lemma, $2e(S)=\sum_{v\in S} d_{G[S]}(v) \le \sum_{v\in S} d_G(v) \le sd$, so that $e(S)\le sd/2$. Therefore,
$ \frac{s(s-1)}{2} \le (n-s)\frac{d(d-1)}{2} + \frac{sd}{2}$, so that $s^2 + (d^2 - 2d - 1)\,s - d(d-1)\,n \le 0$.

Since the quadratic $f(s)=s^2+(d^2-2d-1)s-d(d-1)n$ has positive leading coefficient, the inequality $f(s)\le 0$ holds precisely on the closed interval $[r^-,r^+]$, where $r^-$ and $r^+$ are the two real roots of $f(s)$. These roots exist because the discriminant of $f(s)$ is $
D = (d^2 - 2d - 1)^2 + 4d(d-1)n,
$
and both terms, $(d^2 - 2d - 1)^2$ and $4d(d-1)n$, are strictly positive for all $d\geq 2$ and $n\geq 2$. Indeed, the polynomial $d^2 - 2d - 1$ has roots $1 \pm \sqrt{2}$, which are not integers, ensuring that $(d^2 - 2d - 1)^2 > 0$ for all integers $d$. Thus $r^{\pm}=\frac{-(d^2-2d-1)\pm \sqrt{D}}{2}$. Moreover, $f(0)=-d(d-1)n<0$, since $d(d-1)n>0$ for $d,n \geq 2$. Therefore $r^-<0<r^+$.  
As $s\ge 0$ and $s$ is an integer, it follows that
\[
0 \le s \le \lfloor r^+\rfloor
=\left\lfloor 
\frac{ -(d^2-2d-1) 
+ \sqrt{(d^2-2d-1)^2+4d(d-1)n} }{2}
\right\rfloor.
\]
\end{proof}
The preceding proof can be sharpened in the regular case by replacing the
estimate \(e(S)\le sd/2\) with the exact relation
\(e(S,T)=ds-2e(S)\), where $ T=V(G)\setminus S$.

\begin{theorem}\label{P6.coro6}
Let $G$ be a regular $(d,2)$-graph of order $n\ge 2$ and let
$S\subseteq V(G)$ be a mutual-visibility set of size $s$. Let $T=V(G)\setminus S$.
Then $s^{2}+(d^{2}-2d-1)s-d(d-1)n+e(S,T)\le 0$. Moreover, if $n-s\le d+1$, then $(d^{2}-3d+2n-2)s-n(d^{2}-2d+n-1)\le 0$.
\end{theorem}

\begin{proof}
As in the proof of Theorem~\ref{P6.th1}, Lemma~\ref{P6.lem3}(i) and
\eqref{P6.eq9} imply that $\binom{s}{2}\le (n-s)\binom{d}{2}+e(S)$. Since $G$
is $d$-regular, Lemma~\ref{P6.lem7} gives $e(S,T)=ds-2e(S)$, and hence
$e(S)=\frac{ds-e(S,T)}{2}$. Substituting this into the previous inequality and
simplifying yields 
\begin{equation}\label{P6.eq6}
 s^{2}+(d^{2}-2d-1)s-d(d-1)n+e(S,T)\le 0   
\end{equation}

By Lemma~\ref{P6.lem7}, $e(S,T)=d|T|-2e(T)\ge d|T|-2\binom{|T|}{2}=|T|(d-|T|+1)$. Note that this lower bound is non-negative since $n-s \leq d+1$.
Substituting this lower bound for $e(S,T)$ into the inequality \eqref{P6.eq6}, we get
$s^{2}+(d^{2}-2d-1)s-d(d-1)n+|T|(d-|T|+1)\le 0$. 
Substituting $|T|=n-s$ and simplifying, we obtain the result.
\end{proof}

Next, we derive bounds for $\mu(G)$ when $G$ is a triangle-free $(d,2,-\delta)$-graph with $\delta\ge 0$.
\begin{lemma}\label{P6.lem4}
Let $G$ be a triangle-free $(d,2,-\delta)$-graph, and let $S\subseteq V(G)$.
Then the following hold:
\begin{enumerate}
    \item[(i)] If $\Delta(G[S])\le 1$, then $S$ is a mutual-visibility set of $G$.
    \item[(ii)] If $S$ is a mutual-visibility set of $G$, then $\Delta(G[S])\le \delta+1$.
    \item[(iii)] If $S$ is a mutual-visibility set of $G$, then $0\le 2e(S)\le (\delta+1)|S|$.
\end{enumerate}
\end{lemma}

\begin{proof}
(i) Assume that $\Delta(G[S])\le 1$. Let $u,v\in S$ be non-adjacent. Since $G$ has diameter $2$, the vertices $u$ and $v$ have a common neighbour, say $w$. If $w\in S$, then $w$ is adjacent to both $u$ and $v$ in $G[S]$, and hence $d_{G[S]}(w)\ge 2$, contradicting $\Delta(G[S])\le 1$. Therefore $w\notin S$, and so, by Lemma~\ref{P6.lem1}, $S$ is a mutual-visibility set of $G$.

(ii) Assume that $S$ is a mutual-visibility set of $G$. Let $x\in S$, and set
$k=d_{G[S]}(x)$. We show that $k\le \delta+1$. If $k\le 1$, the claim is clear.
So assume that $k\ge 2$, and let $N_{G[S]}(x)=\{u_1,u_2,\dots,u_k\}$. Since $G$
is triangle-free, the vertices $u_1,u_2,\dots,u_k$ are pairwise non-adjacent. Fix $u_1$. For each $i\in\{2,\dots,k\}$, the vertices $u_1$ and $u_i$ are
non-adjacent vertices of $S$, and $x\in S$ is a common neighbour of $u_1$ and
$u_i$. Since $S$ is a mutual-visibility set, Lemma~\ref{P6.lem1} implies that
$u_1$ and $u_i$ also have a common neighbour in $V(G)\setminus S$. Hence
$|N(u_1)\cap N(u_i)|\ge 2$ for each $i=2,\dots,k$.

Moreover, since $G$ has diameter $2$, we have $u_i\in N_2(u_1)$ for each
$i=2,\dots,k$. Therefore each of the vertices $u_2,\dots,u_k$ contributes at
least $1$ to the sum $\sum_{v\in N_2(u_1)}\bigl(|N(u_1)\cap N(v)|-1\bigr)$, and hence
\[
k-1\le
\sum_{v\in N_2(u_1)}\bigl(|N(u_1)\cap N(v)|-1\bigr).
\]

By Corollary~\ref{P6.coro5},
\[
\sum_{v\in N_2(u_1)}\bigl(|N(u_1)\cap N(v)|-1\bigr)
\le \delta - d\bigl(d-d_G(u_1)\bigr)\le \delta.
\]
Thus $k-1\le \delta$, and hence $k\le \delta+1$. Since $x\in S$ was arbitrary,
it follows that $\Delta(G[S])\le \delta+1$.

(iii) Clearly, $e(S)\ge 0$. By part (ii), $\Delta(G[S])\le \delta+1$, and hence $\sum_{v\in S} d_{G[S]}(v)\le (\delta+1)|S|$. By the handshaking lemma, $2e(S)=\sum_{v\in S} d_{G[S]}(v)$, and the result follows.
\end{proof}
\begin{coro}\label{P6.coro2}
Let $G$ be a Moore graph of diameter 2. If $S\subseteq V(G)$ is a mutual-visibility set, then $
0\leq 2e(S)\le |S|$, where $e(S)$ denotes the number of edges in the induced subgraph $G[S]$.
\end{coro}
\begin{proof}
Since $G$ is a Moore graph of diameter $2$, it is a
triangle-free $(d,2,-0)$-graph for some $d$. Thus Lemma~\ref{P6.lem4}(iii)
applies with $\delta=0$, and yields $0\le 2e(S)\leq |S|$.
\end{proof}
\begin{coro}\label{P6.coro1}
Let $G$ be a Moore graph of diameter $2$. Then the following conditions are equivalent:
\begin{enumerate}
    \item[(i)] $S$ is a mutual-visibility set of $G$.
    \item[(ii)] The induced subgraph $G[S]$ satisfies $\Delta(G[S]) \le 1$.
    \item[(iii)] $G[S]$ is a disjoint union of an induced matching and isolated vertices.
\end{enumerate}
\end{coro}
\begin{proof}
Since $G$ is a Moore graph of diameter $2$, we have $\delta=0$ and $G$ is triangle-free. Hence $(i)\iff(ii)$ follows immediately from Lemma~\ref{P6.lem4}(i) and~(ii).

\smallskip
\noindent(ii) $\Rightarrow$ (iii)  
Assume $\Delta(G[S])\leq 1$. Then every connected component of $G[S]$ has maximum degree at most $1$, and hence each component is either $K_1$ or $K_2$. Let $M$ be the set of edges corresponding to the $K_2$-components. These edges are pairwise disjoint, so $M$ is a matching.  
To see that $M$ is induced, let $v(M)$ denote the set of end vertices of edges in $M$. If there were an edge in $G$ joining vertices of $v(M)$ belonging to different $K_2$-components, then in $G[S]$ one of those endpoints would have degree at least $2$, contradicting $\Delta(G[S])\leq 1$. Thus $G[S]$ contains exactly the edges of $M$ so that $G[S]$ is a disjoint union of an induced matching and isolated vertices.

\smallskip
\noindent(iii)$\Rightarrow$ (ii)  
If $G[S]$ is a disjoint union of an induced matching and isolated vertices, then every vertex of $G[S]$ is incident to at most one edge, and hence $\Delta(G[S])\leq 1$.

\smallskip
This completes the proof of equivalence.
\end{proof}
\begin{theorem}\label{P6.th3}
Let $G$ be a triangle-free $(d,2)$-graph of order $n\ge 2$, where $d\ge 2$. If $S\subseteq V(G)$ is a mutual-visibility set with $|S|=s$, then $
s^{2}+\bigl(2d(d-1)-2\bigr)s-2d(d-1)n \leq 0$, and consequently
\[
0 \leq s \leq
\left\lfloor
\frac{ -(2d(d-1)-2)
+ \sqrt{(2d(d-1)-2)^{2} + 8d(d-1)n} }{2}
\right\rfloor.
\]
\end{theorem}

\begin{proof}
By Lemma~\ref{P6.lem3}(i) and inequality \eqref{P6.eq9}, we have $\binom{s}{2}-e(S)\le (n-s)\binom{d}{2}$. Since $G$ is triangle-free, the induced subgraph $G[S]$ is also triangle-free.
Hence, by Mantel's theorem, $e(S)\leq s^{2}/4$. Therefore,
\[
\binom{s}{2}-\frac{s^{2}}{4}\le (n-s)\binom{d}{2}
\;\Longrightarrow\;
\frac{s(s-2)}{4}\le (n-s)\frac{d(d-1)}{2}.
\]
Multiplying by $4$ and rearranging yields $s^{2}+\bigl(2d(d-1)-2\bigr)s-2d(d-1)n \le 0$.
Now consider the quadratic polynomial $
f(s)=s^{2}+\bigl(2d(d-1)-2\bigr)s-2d(d-1)n$. Since the leading coefficient is positive, the inequality $f(s)\le 0$ holds
precisely for $s$ between its two real roots. The discriminant is $ D=(2d(d-1)-2)^{2}+8d(d-1)n>0$, and the positive root is $\frac{ -(2d(d-1)-2)
+ \sqrt{D}}{2}$. Since $f(0)<0$ and $s\ge 0$ is an integer, the stated bound follows.
\end{proof}
\begin{theorem}\label{P6.th2}
Let $G$ be a triangle-free $(d,2,-\delta)$-graph, where $d\ge 2$, and let
$S\subseteq V(G)$ be a mutual-visibility set of size $s$. Then $
s^{2}+\bigl(d(d-1)-\delta-2\bigr)s-d(d-1)(d^{2}+1-\delta)\le 0$.
Consequently,
\[
0\le s\le
\left\lfloor
\frac{-(d(d-1)-\delta-2)+
\sqrt{(d(d-1)-\delta-2)^{2}+4d(d-1)(d^{2}+1-\delta)}}{2}
\right\rfloor.
\]
\end{theorem}
\begin{proof}
Since $G$ is a $(d,2,-\delta)$-graph, its order is $n=d^2+1-\delta$. Since $G$
is triangle-free, Lemma~\ref{P6.lem4}(iii) gives $2e(S)\le (\delta+1)s$. By
Lemma~\ref{P6.lem3}(i) and \eqref{P6.eq9}, we have
$\binom{s}{2}-e(S)\le (n-s)\binom{d}{2}$. Therefore
\[
\frac{s(s-1)}{2}-\frac{(\delta+1)s}{2}
\le
(d^2+1-\delta-s)\frac{d(d-1)}{2},
\]
which simplifies to $s^{2}+\bigl(d(d-1)-\delta-2\bigr)s-d(d-1)(d^{2}+1-\delta)\le 0$. The stated bound on $s$ follows from the positive root of the quadratic polynomial $f(s)=s^{2}+\bigl(d(d-1)-\delta-2\bigr)s-d(d-1)(d^{2}+1-\delta)$, together with the facts that $f(0)<0$ and $s$ is a non-negative integer.
\end{proof}
\begin{remark}
If $\delta+1 < \frac{s}{2}$, then $\frac{(\delta+1)s}{2}$ provides a better upper bound for $e(S)$ than $\frac{s^{2}}{4}$. In this case, Theorem~\ref{P6.th2} yields a stronger bound on $s$ than Theorem~\ref{P6.th3}.
\end{remark}

The only $(d,k,-1)$-graph is $C_{2k}$ for $d=2$ and $k\ge 2$~\cite{ErdSieHof1980, BanIto1981}. There are four known $(d,2,-2)$-graphs. It is known that, up to isomorphism, for $d=3$ there are exactly two such graphs of order $8$~\cite{Jorgensen1992} as shown in Figure~\ref{P6.fig1}. One is triangle-free, while the other contains a triangle. For $d=4$ and $5$, the graph is unique~\cite{Elspas1964}; see Figure~\ref{P6.fig2}. For further details on graphs with defect, see~\cite{MillerSiran2013}. We next determine the mutual-visibility number for $d=3, 4$ and derive upper bound for $d=5$.
\begin{center}
\begin{minipage}{0.4\textwidth}
\centering
\begin{tikzpicture}[
    scale=0.9,line width=0.8pt,
    every node/.style={circle, draw, fill=white, inner sep=1.2pt, font=\small}, minimum size=3.5mm,
    >=stealth
]

\node (a) at (0,1.5) {$a$};
\node (b) at (1.5,1.5) {$b$};
\node (c) at (3,1.5) {$c$};
\node (d) at (4.5,1.5) {$d$};
\node (e) at (0,0) {$e$};
\node (f) at (1.5,0) {$f$};
\node (g) at (3,0) {$g$};
\node (h) at (4.5,0) {$h$};

\draw (a)--(b)--(c)--(d);
\draw (e)--(f)--(g)--(h);

\draw (a)--(e);
\draw (b)--(f);
\draw (c)--(g);
\draw (d)--(h);

\draw (a)--(h);
\draw (e)--(d);

\end{tikzpicture}

\end{minipage}
\hfill
\begin{minipage}{0.5\textwidth}
\centering
\begin{tikzpicture}[
    scale=0.9,line width=0.8pt,
    every node/.style={circle, draw, fill=white, inner sep=1.2pt, font=\small}, minimum size=3.5mm,
    >=stealth
]

\node (a) at (0,1.6) {$l$};
\node (b) at (0,0)   {$m$};
\node (c) at (1.8,0.8) {$n$};
\node (d) at (5.0,1.6) {$o$};
\node (e) at (3.3,0.8) {$p$};
\node (f) at (5.0,0.8) {$q$};
\node (g) at (5.0,0)   {$r$};
\node (h) at (7.0,0.8) {$s$};

\draw (a)--(b)--(c)--(a);

\draw (a)--(d);
\draw (b)--(g);
\draw[bend right=15] (c) to (f);

\draw (d)--(e)--(f);
\draw (e)--(g);
\draw (d)--(h);
\draw (f)--(h);
\draw (g)--(h);

\end{tikzpicture}
\end{minipage}
\captionof{figure}{The two non-isomorphic  $(3,2,-2)$-graphs}\label{P6.fig1}
\end{center}
\begin{theorem}\label{P6.thm-defect2}
Let $G$ be a $(3,2,-2)$-graph of order $8$. Then $\mu(G)=5$.
\end{theorem}
\begin{proof}
Up to isomorphism, there are exactly two $(3,2,-2)$-graphs of order $8$, one triangle-free and the other containing a triangle (see Figure~\ref{P6.fig1}). We consider these cases separately.

\medskip
\noindent\emph{Case 1. $G$ is triangle-free.}
Let $S$ be a mutual-visibility set of $G$ with size $s$. By Theorem~\ref{P6.th2} with $d=3$ and $\delta=2$, we obtain $s^{2}+2s-48\le 0$, and hence $s\le 6$. We claim that $s\neq 6$. Suppose, to the contrary, that $s=6$, and let $T=V(G)\setminus S$. Then $|T|=2$. By Lemma~\ref{P6.lem3}(i), $\binom{6}{2}-e(S)\le \sum_{w\in T}\binom{|N(w)\cap S|}{2}\le 2\binom{3}{2}=6$, and hence $e(S)\ge 9$.

On the other hand, since $G$ is $3$-regular, by Lemma~\ref{P6.lem7}, we have $3|T|=2e(T)+e(S,T)$ and $3|S|=2e(S)+e(S,T)$. As $|T|=2$, we have $e(T)\le 1$, and hence $e(S,T)\ge 4$. Thus $18=2e(S)+e(S,T)\ge 2e(S)+4$, which implies $e(S)\le 7$, a contradiction. Therefore $s\le 5$. To see that this bound is attained, consider the set $S=\{a,b,d,e,g\}$, where the vertices are labeled as in Figure~\ref{P6.fig1}. Then every non-adjacent pair of vertices in $S$ has a common neighbour in $V(G)\setminus S$. By Lemma~\ref{P6.lem1}, $S$ is a mutual-visibility set, and hence $\mu(G)\ge 5$.

\medskip
\noindent\emph{Case 2. $G$ contains a triangle.} Assume that there exists a mutual-visibility set $S$ of size $s\ge 4$, and let $T=V(G)\setminus S$. Then $|T|=n-s\le 4$. By Theorem~\ref{P6.coro6}, it follows that $s\le 5$. To see that this bound is attained, consider the set
$S=\{l,m,n,p,s\}$, where the vertices are labeled as in
Figure~\ref{P6.fig1}. Then every non-adjacent pair of
vertices in $S$ has a common neighbour in $V(G)\setminus S$, and hence,
by Lemma~\ref{P6.lem1}, $S$ is a mutual-visibility set. Thus $\mu(G)\ge 5$.

In both cases, we have $\mu(G)\le 5$ and $\mu(G)\ge 5$, and hence
$\mu(G)=5$.
\end{proof}
\begin{center}
\begin{minipage}{0.4\textwidth}
\centering
\begin{tikzpicture}[
    scale=1.5,line width=0.8pt,
    every node/.style={circle, draw, fill=white, inner sep=1.2pt, font=\tiny}, minimum size=3.5mm
]

\coordinate (v12) at ( 0.000,  2);
\coordinate (v13) at (-2.000, -1.500);
\coordinate (v14) at ( 2.000, -1.500);

\coordinate (v0)  at (-0.95, -0.3);
\coordinate (v1)  at ( -0.5, -0.13);
\coordinate (v2)  at ( 0.36,  0.8);
\coordinate (v3)  at ( -0.27,  0.4);
\coordinate (v4)  at (-0.36,  0.8);
\coordinate (v5)  at ( 0.27, 0.4);
\coordinate (v6)  at ( 0.95, -0.3);
\coordinate (v7)  at ( 0.5,  -0.13);
\coordinate (v8)  at ( -1, -1.1);
\coordinate (v9)  at (0.3, -0.7);
\coordinate (v10) at (1, -1.1);
\coordinate (v11) at (-0.3, -0.7);

\draw (v0) -- (v1);
\draw (v0) -- (v3);
\draw (v0) -- (v11);
\draw (v0) -- (v12);

\draw (v1) -- (v4);
\draw (v1) -- (v7);
\draw (v1) -- (v8);

\draw (v2) -- (v3);
\draw (v2) -- (v5);
\draw (v2) -- (v14);

\draw (v3) -- (v4);
\draw (v3) -- (v9);

\draw (v4) -- (v5);
\draw (v10) -- (v7);
\draw (v4) -- (v13);

\draw (v5) -- (v6);
\draw (v5) -- (v11);

\draw (v6) -- (v7);
\draw (v6) -- (v9);
\draw (v6) -- (v12);

\draw (v7) -- (v2);

\draw (v8) -- (v9);
\draw (v8) -- (v11);
\draw (v8) -- (v14);

\draw (v9) -- (v10);

\draw (v10) -- (v11);
\draw (v10) -- (v13);

\draw (v11) -- (v0); 

\draw (v12) -- (v13);
\draw (v12) -- (v14);

\draw (v13) -- (v14);

\node at (v0)  {$1$};
\node at (v1)  {$2$};
\node at (v2)  {$3$};
\node at (v3)  {$4$};
\node at (v4)  {$5$};
\node at (v5)  {$6$};
\node at (v6)  {$7$};
\node at (v7)  {$8$};
\node at (v8)  {$9$};
\node at (v9)  {$10$};
\node at (v10) {$11$};
\node at (v11) {$12$};
\node at (v12) {$13$};
\node at (v13) {$14$};
\node at (v14) {$15$};

\end{tikzpicture}

\smallskip
$(4,2,-2)$-graph
\end{minipage}
\hfill
\begin{minipage}{0.5\textwidth}
\centering
\begin{tikzpicture}[
    scale=1,
    every node/.style={circle, draw, fill=white, inner sep=1.2pt, minimum size=3.5mm, font=\tiny,line width=0.7pt},
    thickedge/.style={line width=1pt},
    thinedge/.style={line width=0.45pt}
]
\node (1)  at (0.0,4.2) {$1$};
\node (2)  at (-0.7,2.8) {$2$};
\node (3)  at (0.0,2.8) {$3$};
\node (4)  at (0.7,2.8) {$4$};
\node (5)  at (0.0,1.6) {$5$};
\node (6)  at (0.0,0.7) {$6$};
\node (7)  at (-0.7,-0.8) {$7$};
\node (8)  at (0.7,-0.8) {$8$};

\node (9)  at (3.2,4.2) {$9$};
\node (10) at (2.5,2.8) {$10$};
\node (11) at (3.2,2.8) {$11$};
\node (12) at (3.9,2.8) {$12$};
\node (13) at (3.2,1.6) {$13$};
\node (14) at (3.2,0.7) {$14$};
\node (15) at (2.5,-0.8) {$15$};
\node (16) at (3.9,-0.8) {$16$};

\node (17) at (6.4,4.2) {$17$};
\node (18) at (5.7,2.8) {$18$};
\node (19) at (6.4,2.8) {$19$};
\node (20) at (7.1,2.8) {$20$};
\node (21) at (6.4,1.6) {$21$};
\node (22) at (6.4,0.7) {$22$};
\node (23) at (5.7,-0.8) {$23$};
\node (24) at (7.1,-0.8) {$24$};

\draw[thickedge] (1)--(2) (1)--(3) (1)--(4);
\draw[thickedge] (2)--(5) (3)--(5) (4)--(5);
\draw[thickedge] (2)--(7) (4)--(8);
\draw[thickedge] (6)--(7) (6)--(8);
\draw[thickedge] (7)--(8);

\draw[thickedge] (9)--(10) (9)--(11) (9)--(12);
\draw[thickedge] (10)--(13) (11)--(13) (12)--(13);
\draw[thickedge] (10)--(15)  (12)--(16);
\draw[thickedge] (14)--(15) (14)--(16);
\draw[thickedge] (15)--(16);

\draw[thickedge] (17)--(18) (17)--(19) (17)--(20);
\draw[thickedge] (18)--(21) (19)--(21) (20)--(21);
\draw[thickedge] (18)--(23) (20)--(24);
\draw[thickedge] (22)--(23) (22)--(24);
\draw[thickedge] (23)--(24);

\draw[thinedge] (1)--(9);
\draw[thinedge] (1) to[bend left=8] (17);

\draw[thinedge] (5)--(13) (5)to[bend left=8](21);
\draw[thick] (3) to[bend right=20] (6);
\draw[thick] (11) to[bend right=20] (14);\draw[thick] (19) to[bend right=20] (22);

\draw[thinedge] (2)--(14);
\draw[thinedge] (2)--(24);
\draw[thinedge] (3)--(16);
\draw[thinedge] (3)--(23);
\draw[thinedge] (4)--(15);
\draw[thinedge] (4)--(22);
\draw[thinedge] (6)--(11);
\draw[thinedge] (7)--(19);
\draw[thinedge] (7)--(10);
\draw[thinedge] (8)--(12);
\draw[thinedge] (8)--(18);

\draw[thinedge] (9)--(21);
\draw[thinedge] (10)--(23);
\draw[thinedge] (11)--(22);
\draw[thinedge] (13)--(17);
\draw[thinedge] (14)--(18);
\draw[thinedge] (15)--(4);
\draw[thinedge] (16)--(19);

\draw[thinedge] (6)to[bend right=7](20);
\draw[thinedge] (12)--(24);
\draw[thinedge] (20)--(15);
\end{tikzpicture}

\smallskip
$(5,2,-2)$-graph
\end{minipage}
\captionof{figure}{}\label{P6.fig2}
\end{center}

\begin{theorem}\label{P6.th4}
Let $G$ be the unique $(4,2,-2)$-graph of order $15$. Then $\mu(G) = 9$.
\end{theorem}
\begin{proof}
Let $S\subseteq V(G)$ be a mutual-visibility set of size $s$. By Theorem~\ref{P6.th1} with $d=4$ and $n=15$, we have $s^{2}+7s-180\le 0$, and hence $s\le 10$. Suppose that there exists a mutual-visibility set $S$ with $s=10$, and let $T=V(G)\setminus S$. Then $|T|=5$. By Theorem~\ref{P6.coro6}, $s^{2}+7s-180+e(S,T)\le 0$, and substituting $s=10$ gives $e(S,T)\le 10$. Thus $\sum_{w\in T}|N(w)\cap S|=e(S,T)\le 10$. Since $G$ is $4$-regular, we have $|N(w)\cap S|\in\{0,1,2,3,4\}$ for each $w\in T$, and the function $\binom{x}{2}$ is increasing on this set. Therefore $\sum_{w\in T}\binom{|N(w)\cap S|}{2}$ is maximized for a choice of values $|N(w)\cap S|$ given by $4,4,2,0,0$, and hence
\[
\sum_{w\in T}\binom{|N(w)\cap S|}{2}\le 2\binom{4}{2}+\binom{2}{2}=13.
\]
On the other hand, Lemma~\ref{P6.lem7} gives $2e(S)+e(S,T)=4|S|=40$, and hence $e(S)=20-\frac{e(S,T)}{2}$. By Lemma~\ref{P6.lem3}(i),
\[
45-\left(20-\frac{e(S,T)}{2}\right)\le \sum_{w\in T}\binom{|N(w)\cap S|}{2} \implies 25+\frac{e(S,T)}{2}\le \sum_{w\in T}\binom{|N(w)\cap S|}{2}
\]
In particular, $25\le \sum_{w\in T}\binom{|N(w)\cap S|}{2}$, a contradiction. Hence $s\neq 10$, and therefore $\mu(G)\le 9$. To see that this bound is attained, consider the set
$S=\{2,4,6,8,10,12,13,14,15\}$, where the vertices are labeled as in
Figure~\ref{P6.fig2}. Then every non-adjacent pair of vertices in $S$ has a common neighbour in $V(G)\setminus S$, and hence, by Lemma~\ref{P6.lem1}, $S$ is a mutual-visibility set. Thus $\mu(G)=9$.
\end{proof}
\begin{theorem}\label{P6.th5}
Let $G$ be the unique $(5,2,-2)$-graph of order $24$. Then $\mu(G) \leq 15$.
\end{theorem}
\begin{proof}
Let $S\subseteq V(G)$ be a mutual-visibility set of size $s$. By
Theorem~\ref{P6.th1} with $d=5$ and $n=24$, we obtain $ s^{2}+14s-480\leq 0$, and hence $
s\leq 16$. We next rule out the case $s=16$. Let $ T=V(G)\setminus S$.
Then $|T|=8$. By Theorem~\ref{P6.coro6},
$s^{2}+14s-480+e(S,T)\le 0.$ Substituting $s=16$, we get $256+224-480+e(S,T)\le 0$, 
and therefore $
e(S,T)\le 0$. This is impossible, since $G$ is connected and both $S$ and $T$ are nonempty.
Hence $s\neq 16$, and so
$\mu(G)\le 15$.
\end{proof}
\begin{remark}\label{P6.rem2}
A brief computer search shows that $\{1,5,6,7,8,9,13,14,15,16,22,23,24\}$ is a mutual-visibility set (see Figure~\ref{P6.fig2}), and $\mu(G)=13$.
\end{remark}
\section{Mutual-visibility in Moore graphs}\label{P6.sec4}
We now determine the mutual-visibility number of Moore graphs of diameter $2$, that is, the case of defect zero. Apart from the cycle $C_{5}$, there are only two known Moore graphs of diameter $2$, namely the Petersen graph and the Hoffman--Singleton graph. The visibility polynomial of $C_n$ was determined in~\cite{VP_1}. Since the Petersen graph is geodetic, mutual-visibility sets coincide with general position sets in it. Moreover, the general position polynomial of the Kneser graphs $K(n,2)$ was determined in~\cite{GP_2}, and the Petersen graph is isomorphic to $K(5,2)$. Thus the case of the Petersen graph is already settled, and it remains to study the Hoffman--Singleton graph.
\begin{defn}
The Hoffman-Singleton graph is the unique graph $H$ having the following equivalent descriptions:
\begin{enumerate}
  \item[(i)] $H$ is the unique strongly regular graph with parameters $\rm{srg}(50,7,0,1)$. 
  \item[(ii)] $H$ is the Moore graph of degree $7$ and diameter $2$.
\end{enumerate}
\end{defn}

\begin{prop}\label{P6.lem8}
Let $H$ be the Hoffman-Singleton graph. If $S\subseteq V(H)$ is a mutual-visibility set of size $s$, then $\mu(H)\le 26$.
\end{prop}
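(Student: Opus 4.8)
The plan is to apply the bound from Lemma~\ref{P6.lem6} directly, since the Hoffman--Singleton graph $H$ satisfies all of its hypotheses. Indeed, $H$ is triangle-free, it is $d$-regular with $d=7\ge 3$, and because $H$ is strongly regular of type $\mathrm{srg}(50,7,0,1)$, every pair of non-adjacent vertices has exactly $\mu=1$ common neighbour, hence a \emph{unique} common neighbour. Thus all the structural assumptions needed to invoke Lemma~\ref{P6.lem6} are in place.

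With these parameters identified, I would substitute $n=50$ and $d=7$ into the quadratic inequality of Lemma~\ref{P6.lem6}. The quantity $d(d-1)n$ becomes $7\cdot 6\cdot 50 = 2100$, so the inequality $s^2+4s-d(d-1)n\le 0$ reads $s^2+4s-2100\le 0$. The bound on $s$ is then
\[
0\le s\le \left\lfloor -2+\sqrt{4+2100}\,\right\rfloor
      =\left\lfloor -2+\sqrt{2104}\,\right\rfloor.
\]

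The only remaining step is the numerical estimate of $\sqrt{2104}$. Since $45^2=2025$ and $46^2=2116$, we have $45<\sqrt{2104}<46$, and more precisely $\sqrt{2104}\approx 45.87$, so $-2+\sqrt{2104}\approx 43.87$. Taking the floor gives $\lfloor -2+\sqrt{2104}\rfloor = 43$, which would yield $s\le 43$ rather than the claimed bound of $26$.

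Here lies the main discrepancy, and it is the part of the argument I would scrutinise most carefully. The bound $s\le 43$ coming straight from Lemma~\ref{P6.lem6} is far weaker than the asserted $\mu(H)\le 26$, so the proof cannot rest on that lemma alone. To reach $26$ one must exploit the much stronger combinatorial constraint that $H$ contains \emph{no} $4$-cycles, which forces every vertex $w\notin S$ to satisfy $|N(w)\cap S|\le 1$ whenever $S$ is an independent set (any $w$ with two neighbours in an independent $S$ would create a path that, combined with the uniqueness of common neighbours, overcounts). More generally, the girth-$5$ condition means two vertices share at most one common neighbour, so the inflated estimate $\binom{|N(w)\cap S|}{2}\le\binom{d}{2}$ used in Lemma~\ref{P6.lem6} is extremely lossy. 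The correct approach would instead count the \emph{total} number of non-adjacent pairs in $S$ against the number of vertices outside $S$ available as private common neighbours, using that each such external vertex can serve as the unique common neighbour for at most $\binom{7}{2}$ pairs but that the girth constraint caps how many pairs within $S$ can funnel through any single witness. I expect the sharp count to come from pairing Lemma~\ref{P6.lem2}(ii) with a refined per-vertex bound $|N(w)\cap S|\le$ (something governed by the $4$-cycle-free structure), and from bounding $e(S)$ via Corollary~\ref{P6.coro2}; reconciling these to land exactly at $26$, rather than the crude $43$, is the genuine obstacle and the place where the special geometry of $H$ must enter.
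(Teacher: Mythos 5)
Your proposal does not actually prove the statement. You correctly verify that $H$ satisfies the hypotheses of Lemma~\ref{P6.lem6}, correctly compute that this lemma only yields $s\le\lfloor -2+\sqrt{2104}\rfloor = 43$, and honestly flag that this falls far short of $26$ --- but the final paragraph is then only a sketch of what \emph{might} close the gap, not an argument. Moreover, the speculation points in the wrong direction. The paper does not reach $26$ by refining the per-vertex estimate $\binom{|N(w)\cap S|}{2}\le\binom{d}{2}$ or by invoking the girth-$5$ structure; for this proposition it abandons pair-counting (Lemma~\ref{P6.lem2}) entirely and uses a simple edge count instead. Put $T=V(H)\setminus S$. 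By Lemma~\ref{P6.lem7} and $7$-regularity, $e(S,T)=7s-2e(S)$; trivially $e(S,T)\le 7|T|=7(50-s)$; and Corollary~\ref{P6.coro2} (the one place where the mutual-visibility hypothesis enters, via $\Delta(H[S])\le 1$) gives $e(S)\le s/2$. Combining, $7s-s\le 7(50-s)$, i.e.\ $13s\le 350$, so $s\le 26$. In other words, the key idea you were missing is that each vertex of $S$ sends at least $6$ of its $7$ edges into $T$, while $T$ can absorb at most $7(50-s)$ edges --- no girth or common-neighbour refinement is needed at this stage. (The sharper tools you gesture at, Lemma~\ref{P6.lem2}(ii) plus convexity, are exactly what the paper deploys later, in Theorem~\ref{th_20}, to push the bound from $26$ down to $20$.)

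One further concrete error: your parenthetical claim that the absence of $4$-cycles forces $|N(w)\cap S|\le 1$ for every $w\notin S$ whenever $S$ is independent is false. A single vertex $w$ with many neighbours in an independent set creates no $4$-cycle; for instance, taking $S=N(v)$ for any vertex $v$ of $H$ gives an independent set (by triangle-freeness) with $|N(v)\cap S|=7$. The girth-$5$ condition only says that \emph{two} vertices have at most one common neighbour, which bounds how many witnesses a fixed pair can have, not how many pairs a fixed witness can serve.
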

\begin{proof}
Let $S\subseteq V(H)$ be a mutual-visibility set, and denote its cardinality by $s$.  
Set $T = V(H)\setminus S$.  From Lemma~\ref{P6.lem7} it follows that
\[
e(S,T)=\sum_{v\in S}\deg_H(v)-2e(S)=ds-2e(S),
\]
where $d=7$ and $n=50$ for the Hoffman-Singleton graph. 
Since each vertex of $T$ has at most $d$ neighbours in $S$,  $
e(S,T)\le d|T|=d(n-s)$. 
Combining the inequality with last equation we get, $ds-2e(S)\le d(n-s)$. Hence,
\[
ds \le \frac{dn}{2} + e(S).
\]
By Corollary~\ref{P6.coro2}, we have, $e(S)\le s/2$. Hence, $
ds \le \frac{dn}{2} + \frac{s}{2}$,
and therefore $(2d-1)s \leq dn$. Thus $
s \le \frac{dn}{2d-1}$,
and for $d=7$, $n=50$ we obtain $
s \le \left\lfloor\frac{7\cdot 50}{13}\right\rfloor = 26$.
Hence $\mu(H)\le 26$, as claimed.
\end{proof}
In Proposition~\ref{P6.lem8}, we establish an upper bound of 26 for the size of any mutual-visibility set in the Hoffman–Singleton graph. Although this provides a useful restriction on the possible size of such sets, the bound is not tight. In the subsequent analysis, we refine our approach and derive a sharper upper bound, showing that the size of a mutual-visibility set in Hoffman-Singleton graph cannot exceed 20.
\begin{theorem}[{\cite[Th.~7.1]{Cvetkovski2012}} ]
Let $f:(a,b)\to\mathbb{R}$ and suppose that $f''(x)$ exists for every $x \in (a,b)$. Then $f$ is convex on $(a,b)$ if and only if $f''(x) \;\geq\; 0 \quad \text{for all } x \in (a,b)$. Moreover, if $f''(x) > 0$ for all $x \in (a,b)$, then $f$ is strictly convex on $(a,b)$.
\end{theorem}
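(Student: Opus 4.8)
The plan is to reduce both implications to the monotonicity of the first derivative, exploiting the fact that $f''\ge 0$ on $(a,b)$ is equivalent to $f'$ being non-decreasing there, and then to link this monotonicity to the defining chord inequality of convexity through the Mean Value Theorem.

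First I would treat the direction $f''\ge 0 \Rightarrow f$ convex. Fix $x<y$ in $(a,b)$ and $\lambda\in(0,1)$, and set $z=\lambda x+(1-\lambda)y$, so that $x<z<y$ with $z-x=(1-\lambda)(y-x)$ and $y-z=\lambda(y-x)$. Applying the Mean Value Theorem on $[x,z]$ and on $[z,y]$ produces points $\xi_1\in(x,z)$ and $\xi_2\in(z,y)$ with $f(z)-f(x)=f'(\xi_1)(z-x)$ and $f(y)-f(z)=f'(\xi_2)(y-z)$. Since $f''\ge 0$, the derivative $f'$ is non-decreasing, so $\xi_1<\xi_2$ gives $f'(\xi_1)\le f'(\xi_2)$. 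Substituting these expressions and clearing the positive denominators rearranges precisely into $f(z)\le \lambda f(x)+(1-\lambda)f(y)$, which is the convexity inequality of \eqref{P6.eq3}. If instead $f''>0$, then $f'$ is strictly increasing, whence $f'(\xi_1)<f'(\xi_2)$ is strict and the same chain of manipulations yields strict inequality, establishing the strict-convexity addendum.

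For the converse $f$ convex $\Rightarrow f''\ge 0$, I would show that convexity forces the difference quotients $t\mapsto \frac{f(t)-f(x)}{t-x}$ to be monotone (the standard three-slope property), from which $f'$ is non-decreasing; since $f'$ is differentiable by hypothesis, the monotonicity of $f'$ gives $f''(x)=\lim_{h\to 0}\frac{f'(x+h)-f'(x)}{h}\ge 0$ at every point. Equivalently, one may argue by contradiction: were $f''(x_0)<0$ at some $x_0$, then $f'$ would be strictly decreasing on a neighbourhood of $x_0$, contradicting the monotonicity of $f'$ imposed by convexity.

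The main obstacle is the careful derivation of the three-slope (monotone difference-quotient) property directly from the two-point definition of convexity, since this is the step that bridges the finitely many points appearing in the chord inequality and the limiting statement about $f'$. Once that lemma is secured, both halves of the equivalence and the strict-convexity claim follow routinely from the Mean Value Theorem.
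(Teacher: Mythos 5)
Your proof is correct, but note that the paper itself offers no proof of this statement to compare against: it is quoted verbatim as Theorem~7.1 of the cited reference \cite{Cvetkovski2012} and used as a black box (its only role in the paper is to certify that $f(x)=\binom{x}{2}$ is strictly convex in the proof of Theorem~\ref{th_20}). Judged on its own, your argument is the standard and complete one: the forward direction via the Mean Value Theorem on $[x,z]$ and $[z,y]$ together with monotonicity of $f'$ (which itself follows from $f''\ge 0$ by another application of MVT to $f'$), the strict case by the same algebra with a strict slope inequality, and the converse via the three-slope property forcing $f'$ to be non-decreasing, whence every difference quotient of $f'$ is nonnegative and so is its limit $f''$. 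The one point worth tightening is your alternative contradiction argument for the converse: $f''(x_0)<0$ does not by itself make $f'$ strictly decreasing on a neighbourhood of $x_0$ (as $f''$ need not be continuous); what it does give is that $f'(x_0+h)<f'(x_0)$ for all sufficiently small $h>0$, which already contradicts the monotonicity of $f'$, so the conclusion survives with that minor repair. Your primary argument needs no repair at all.
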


\begin{theorem}\label{th_20}
Let $H$ be the Hoffman-Singleton graph. Then $\mu(H) \leq 20$. Moreover, if a mutual-visibility set $S \subseteq V(H)$ of size $20$ exists, then the induced subgraph $H[S]$ is an induced matching consisting of ten disjoint edges and every vertex of $T = V(H) \setminus S$ has exactly four neighbours in $S$.
\end{theorem}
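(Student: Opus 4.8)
\section*{Proof proposal}

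The plan is to extract two exact counting identities for a mutual-visibility set $S$ of size $s$ and then squeeze $s$ by convexity. Write $T = V(H)\setminus S$, abbreviate $e = e(S)$, and for each $w\in T$ put $a_w = |N(w)\cap S|$. Since $H$ is triangle-free and every non-adjacent pair has a unique common neighbour, Lemma~\ref{P6.lem2}(ii) gives the exact equality $\sum_{w\in T}\binom{a_w}{2} = \binom{s}{2}-e$, while Lemma~\ref{P6.lem7} with $d=7$ gives $\sum_{w\in T} a_w = 7s-2e$. I would set $A = 7s-2e$ and $m = |T| = 50-s$.

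Next I would apply Jensen's inequality to the strictly convex function $f(x)=\binom{x}{2}=\tfrac{x(x-1)}{2}$ (strictly convex since $f''(x)=1>0$) with the $m$ equal weights $1/m$ over the points $\{a_w\}_{w\in T}$. This yields $m\,f(A/m)\le \sum_{w\in T} f(a_w)$, i.e.\ $\tfrac{A(A-m)}{2m}\le \binom{s}{2}-e$, which after clearing the positive denominator $m=50-s$ becomes
\[
(7s-2e)^2 \;\le\; (50-s)\,(s^2+6s-4e).
\]
Writing $g(s,e) = (50-s)(s^2+6s-4e)-(7s-2e)^2$, the inequality reads $g(s,e)\ge 0$, and this must hold for the actual value $e=e(S)$.

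The decisive step is to treat $g(s,\cdot)$ as a quadratic in $e$: it expands to $g(s,e) = -4e^2 + (32s-200)e - s^3 - 5s^2 + 300s$, a downward parabola with vertex at $e = 4s-25$. By Corollary~\ref{P6.coro2} the admissible range is $0\le e\le s/2$, and since $4s-25 > s/2$ for $s\ge 8$, the maximum of $g(s,e)$ over this range is attained at the endpoint $e=s/2$. A short computation then produces the clean factorization $g(s,s/2) = -s(s-20)(s+10)$, which is negative for every $s>20$. Hence for $s\ge 21$ we would have $g(s,e)<0$ for all admissible $e$, contradicting $g(s,e(S))\ge 0$; therefore $\mu(H)\le 20$.

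Finally, for the structural claim I would revisit $s=20$. The vertex of the parabola still lies beyond $s/2 = 10$, so $g(20,e)$ is strictly increasing on $[0,10]$ with $g(20,10)=0$; thus $g(20,e(S))\ge 0$ forces $e(S)=10$ and equality throughout. Equality in Jensen for the strictly convex $f$ forces all the $a_w$ equal, and since $\sum_{w\in T} a_w = 7\cdot 20 - 2\cdot 10 = 120$ over $m=30$ vertices, each $a_w = 4$; that is, every vertex of $T$ has exactly four neighbours in $S$. Moreover $2e(S)=20=|S|$ means $H[S]$ has no isolated vertex, so by Lemma~\ref{P6.lem5} it is an induced matching of ten disjoint edges. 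The main obstacle I anticipate is the organisational one of selecting $f(x)=\binom{x}{2}$ as the convexity vehicle and verifying that the extremum of $g$ over $e$ sits at $e=s/2$; once that is in place, the factorization $-s(s-20)(s+10)$ does all the work and simultaneously pins down the extremal structure.
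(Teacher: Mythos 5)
Your proposal is correct and follows essentially the same route as the paper's proof: the same two identities from Lemma~\ref{P6.lem2}(ii) and Lemma~\ref{P6.lem7}, the same Jensen argument for $f(x)=\binom{x}{2}$, the same observation that the critical point $e_0=4s-25$ lies beyond $s/2$ so the extremum over $[0,s/2]$ sits at $e=s/2$, and the same equality analysis for $s=20$; your only cosmetic departures are clearing the denominator to work with the polynomial $g(s,e)$ and its factorization $-s(s-20)(s+10)$ (the paper keeps the concave function $D(e)$ and derives $s^2-10s-200\le 0$ directly), and skipping the paper's appeal to Proposition~\ref{P6.lem8}, which you replace implicitly by the harmless requirement that $T\neq\emptyset$ so the denominator $50-s$ is positive.
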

\begin{proof}
Let $S\subseteq V(H)$ be a mutual-visibility subset of cardinality $s$, and put $T=V(H)\setminus S$.  Let $e(S)$ denote the number of edges in the induced subgraph $H[S]$. 
For each vertex $t$ belonging to $T$, define $k_t$ as the number of neighbours of $t$ lying in $S$, that is, $k_t = |N(t)\cap S|$.  
Then, by Corollary~\ref{P6.coro3} (the case $\delta = 0$)
\begin{equation}\label{P6.eq1}
\binom{s}{2}-e(S)=\sum_{t\in T}\binom{k_t}{2}.
\end{equation}

By definition, let $k_t$ denote the number of edges joining a vertex $t \in T$ to vertices of $S$. Hence $\sum_{t \in T} k_t$ is the total number of edges with one endpoint in $T$ and the other in $S$; 
that is, $
\sum_{t \in T} k_t \;=\; e(S,T)$. 
Therefore, by Lemma~\ref{P6.lem7}, we obtain
\begin{equation}\label{p6.eq4}
\sum_{t \in T} k_t  =  e(S,T)  = 7s - 2e(S)    
\end{equation}
Now let $f(x) = \binom{x}{2}$ for $x \in \mathbb{R}$. Since $f''(x) = 1 > 0$, the function $f$ is strictly convex on $\mathbb{R}$. Applying Jensen’s inequality 
to $f$ with the nonnegative numbers $k_t \geq 0$ (for $t \in T$) and equal weights 
$1/|T|$, we obtain

\begin{align*}
f \left( \frac{1}{|T|}\sum_{t\in T} k_t\right) &\leq \frac{1}{|T|}\sum_{t\in T} f(k_t)\\
f \left( \frac{1}{|T|}(7s-2e(S))\right) & \leq \frac{1}{|T|} \sum_{t\in T}\binom{k_t}{2}\\
\sum_{t\in T}\binom{k_t}{2} &\geq |T|\binom{\frac{7s-2e(S)}{|T|}}{2}
=\frac{(7s-2e(S))^2}{2|T|}-\frac{7s-2e(S)}{2}
\end{align*}
Using the equation \eqref{P6.eq1}, we get
\begin{equation}\label{P6.eq2}
\binom{s}{2}-e(S) \geq \frac{(7s-2e(S))^2}{2(50-s)}-\frac{7s-2e(S)}{2}
\end{equation}
Fix $s$ and, for brevity, write $e = e(S)$. Define
\[
F(e)=\frac{(7s-2e)^2}{2(50-s)}-\frac{7s-2e}{2},
\qquad D(e)=\binom{s}{2}-e-F(e).
\]

The value of \(e\) associated to a mutual-visibility set \(S\) must satisfy \(D(e)\ge 0\). Since $H$ is a Moore graph of diameter 2, by Corollary~\ref{P6.coro2}, $e(S)\le s/2$. Thus, for each fixed $s$, admissible values of $e \in [0, s/2]$ exist precisely when 
$\max_{0 \le e \le s/2} D(e) \ge 0$.
Indeed, if the maximum of \(D\) on \([0,s/2]\) were negative, then \(D(e)<0\) for every feasible \(e\) and no set \(S\) of size \(s\) could satisfy the inequality \eqref{P6.eq2}.

Thus, to obtain a necessary condition on $s$, we examine the maximum value of $D(e)$ over the closed interval $[0, s/2]$. 
A straightforward calculus check shows that $F''(e) = \frac{4}{50 - s}$.
By Proposition~\ref{P6.lem8}, we have $s \le 26$, and hence $F''(e) > 0$. 
Therefore, $F$ is convex, and consequently, $-F$ is concave.
 Since, $\binom{s}{2}-e$ is a linear function in $e$, it is concave. Therefore, $D(e)=\binom{s}{2}-e-F(e)$ is concave on \([0,s/2]\). Consequently, \(\max_{0\le e\le s/2} D(e)\) is attained either at a stationary point or at one of the endpoints. Computing \(D'(e)\) yields the unique critical point \(e_0=4s-25\), and
$$4s-25 > \frac{s}{2} \iff s \geq 8$$
 Hence if $s \geq 8$, then the stationary point lies to the right of the feasible interval and the concave function \(D\) is increasing on \([0,s/2]\); therefore the maximum is attained at the right endpoint \(e=s/2\). Accordingly a necessary condition for existence of a feasible \(e\) is $D \bigl(s/2\bigr)\geq 0$, that is,
\begin{align*}
 \binom{s}{2}-\frac{s}{2}
 &\geq 
\frac{\bigl(7s-2\cdot\frac{s}{2}\bigr)^2}{2(50-s)}-\frac{7s-2\cdot\frac{s}{2}}{2}\\
\frac{s(s-2)}{2} &\ge \frac{(6s)^2}{2(50-s)}-\frac{6s}{2}\\
s-2 &\geq \frac{36s}{50-s}-6\\
(50-s)(s+4) &\geq 36s\quad\Longleftrightarrow\quad s^2-10s-200\le 0
\end{align*}
Hence $s\le 20$. Therefore $\mu(H)\le 20$.

If $s = 20$, then equality holds throughout the above chain. 
In particular, $e(S) = s/2 = 10$, and equality in Jensen’s inequality implies that $k_t$ is constant for all $t \in T$, since $f$ is strictly convex. From Equation~\eqref{p6.eq4}, we have 
\[
\sum_{t \in T} k_t = 7s - 2e(S)
\quad \Longrightarrow \quad 
|T|\,k_t = 7s - 2e(S)
\quad \Longrightarrow \quad 
k_t = \frac{7s - 2e(S)}{|T|} = \frac{7 \cdot 20 - 20}{30} = 4,
\]
for each vertex $t$ belonging to $T$.

Since $s=20$ and $e(S)=10$, it follows from Corollary~\ref{P6.coro1} that $H[S]$ consists of exactly ten disjoint edges, that is, an induced matching. Moreover, since $k_t=4$, each vertex $t\in T$ has exactly four neighbours in $S$.
\end{proof}
Next, we verify that the upper bound $s \le 20$ is sharp. To this end, we formulate the following integer program.

\subsection*{Integer linear program formulation}

Under the hypothesis of Corollary~\ref{P6.coro1}, a subset $S \subseteq V(G)$ is a mutual-visibility set if and only if the induced subgraph $G[S]$ has maximum degree at most $1$. 
This combinatorial condition can be captured by the following set of zero-one constraints.

Let $S$ be a mutual-visibility set of $G$. 
Introduce binary variables $x_v \in \{0,1\}$ for each $v \in V(G)$, where $x_v = 1$ indicates that $v \in S$, and $x_v = 0$ otherwise. 
If $v \in S$, then by Corollary~\ref{P6.coro1}(ii), $\deg_{G[S]}(v) \le 1$; that is, at most one vertex $u \in N_G(v)$ belongs to $S$. 
Hence,
\[
\sum_{u \in N_G(v)} x_u \le 1.
\]
If $v \notin S$, then Corollary~\ref{P6.coro1}(ii) imposes no restriction on $\sum_{u \in N_G(v)} x_u$, which in this case equals $\deg_G(v)$. 
Combining these two cases yields
\[
\sum_{u \in N_G(v)} x_u \;\le\; 1 + \deg_G(v)\,(1 - x_v), 
\qquad \text{for every } v \in V(G).
\]
Thus, the feasible $0$--$1$ vectors $x$ correspond precisely to those subsets $S$ for which $\Delta(G[S]) \le 1$, and therefore, by Corollary~\ref{P6.coro1}, to the mutual-visibility sets of $G$.

Therefore the mutual-visibility number \(\mu(G)\) is given by the following zero-one integer program:
\[
\text{(IP--MV)} \quad \left\{
\begin{aligned}
\text{maximize}\quad & \sum_{v\in V(G)} x_v\\[4pt]
\text{subject to}\quad & \sum_{u\in N_G(v)} x_u \le 1 + \deg_G(v)\,(1-x_v)
\quad \forall v\in V(G),\\[4pt]
& x_v\in\{0,1\}\quad \forall v\in V(G)
\end{aligned}
\right.
\]

For the Hoffman--Singleton graph, the constraints of the integer program (IP-MV) becomes $$\sum_{u\in N_G(v)} x_u \le 1 + 7(1-x_v) \iff \sum_{u\in N_G(v)} x_u + 7x_v \le 8
\quad\text{for all }v\in V(H).$$
The integer linear programming implementation in \texttt{Python} yields an optimal value of \textnormal{(IP--MV)} equal to $20$ for the Hoffman--Singleton graph, with a corresponding mutual-visibility set
\[
S = \{0, 2, 12, 17, 19, 21, 24, 26, 31, 32, 33, 35, 37, 39, 40, 43, 44, 45, 48, 49\}
\]
of size~$20$ where, the vertices are labelled from 0 to 49 and the adjacency matrix of the Hoffman--Singleton graph was taken 
from the function \texttt{hoffman\_singleton\_graph()} in the \texttt{networkx} package.

From Corollary~\ref{P6.coro1}, if the graph $G$ is Moore graph of diameter two then every mutual-visibility set induces a disjoint union of an induced matching and isolated vertices. The integer programming computation shows that the maximum size of a mutual-visibility set in the Hoffman–Singleton graph is 
20. This yields the following result on the size of the largest induced matching in Hoffman-Singleton graph.
\begin{coro}
The maximum size of an induced matching in the Hoffman--Singleton graph is ten.
\end{coro}
\begin{proof}
Note that the Hoffman-Singleton graph $H$ satisfies the hypothesis of Corollary~\ref{P6.coro1}. 
Therefore, by Corollary~\ref{P6.coro1}(iii), a vertex set $S$ is a mutual-visibility set of $H$ if and only if the induced subgraph $H[S]$ is a disjoint union of an induced matching and isolated vertices. Hence, for every induced matching in $H$, the set of its vertices forms a mutual-visibility set. Consequently, the cardinality of the vertex set of any induced matching is at most the maximum cardinality of a mutual-visibility set. Since $\mu(H) \le 20$, no induced matching in $H$ can contain more than $20$ vertices.

By the integer programming formulation, we established that the maximum size of a mutual-visibility set in the Hoffman-Singleton graph is $20$. Furthermore, by Theorem~\ref{th_20}, if $S$ is a mutual-visibility set of the Hoffman-Singleton graph of size $20$, then the induced subgraph $H[S]$ is a disjoint union of ten edges; that is, $H[S]$ forms an induced matching of size~$10$. Hence, the maximum size of an induced matching in the Hoffman-Singleton graph is~$10$.

\end{proof}
\section{Conclusion}
In this paper, we investigated size of mutual-visibility sets in $(d,2,-\delta)$-graphs with $\delta \geq 0$, developing algebraic conditions that govern the mutual-visibility number. 
We then specialized our study to graphs with defect $2$, determining the mutual-visibility number for three of the four known cases and deriving an upper bound for the remaining one. In the defect-zero setting, corresponding to Moore graphs of diameter $2$, we consider Hoffman-Singleton graph. For this graph, we established an upper bound of $20$ for the mutual-visibility number and confirmed its sharpness via an integer programming approach. As a consequence, we obtained that the maximum size of an induced matching in the Hoffman-Singleton graph is $10$. Overall, our results demonstrate that algebraic techniques, when combined with optimization, can yield exact values and sharp bounds for visibility parameters in structured graph classes. This approach suggests several directions for further research,  extending the analysis to graphs of larger diameter or other extremal families.
\section*{Declarations}
\subsection*{Funding sources}
This research did not receive any specific grant from funding agencies in the public, commercial, or not-for-profit sectors.
\subsection*{Conflict of interest}
 The authors declare that they have no conflict of interest.

 \subsection*{Data Availability}No data were used to support this study.

\bibliographystyle{plainurl}
\bibliography{cas-refs}

\begin{thebibliography}{10}

\bibitem{robotics7}
A.~Aljohani, P.~Poudel, and G.~Sharma.
\newblock Complete visitability for autonomous robots on graphs.
\newblock In {\em Proceedings of the IEEE International Parallel and
  Distributed Processing Symposium (IPDPS)}, pages 733--742, 2018.
\newblock \href {https://doi.org/10.1109/IPDPS.2018.00083}
  {\path{doi:10.1109/IPDPS.2018.00083}}.

\bibitem{robotics4}
R.J. Alsaedi, J.~Gudmundsson, and A.~van Renssen.
\newblock The mutual visibility problem for fat robots.
\newblock {\em Algorithms and Data Structures. WADS 2023}, 14079:15--28, 2023.
\newblock \href {https://doi.org/10.1007/978-3-031-38906-1_2}
  {\path{doi:10.1007/978-3-031-38906-1_2}}.

\bibitem{BanIto1973}
E~Bannai and T~Ito.
\newblock On finite moore graphs.
\newblock {\em Journal of the Faculty of Science, University of Tokyo, Section
  IA, Mathematics}, 20:191--208, 1973.

\bibitem{BanIto1981}
Eiichi Bannai and Tatsuro Ito.
\newblock Regular graphs with excess one.
\newblock {\em Discrete Mathematics}, 37:147--158, 1981.
\newblock \href {https://doi.org/10.1016/0012-365X(81)90215-6}
  {\path{doi:10.1016/0012-365X(81)90215-6}}.

\bibitem{MV_6}
B.~Brešar and I.G. Yero.
\newblock Lower (total) mutual-visibility number in graphs.
\newblock {\em Applied Mathematics and Computation}, 456:128411, 2024.
\newblock \href {https://doi.org/10.1016/j.amc.2023.128411}
  {\path{doi:10.1016/j.amc.2023.128411}}.

\bibitem{sandi}
Cs. Bujtás, S.~Klavžar, and J.~Tian.
\newblock Visibility polynomials, dual visibility spectrum, and
  characterization of total mutual-visibility sets.
\newblock {\em Aequationes mathematicae}, 99:1883--1901, 2025.
\newblock \href {https://doi.org/10.1007/s00010-025-01197-y}
  {\path{doi:10.1007/s00010-025-01197-y}}.

\bibitem{induced_match_1}
K.~Cameron.
\newblock Induced matchings.
\newblock {\em Discrete Applied Mathematics}, 24(1–3):97--102, 1989.
\newblock \href {https://doi.org/10.1016/0166-218X(89)90089-3}
  {\path{doi:10.1016/0166-218X(89)90089-3}}.

\bibitem{robotics5}
S.~Cicerone, A.~Di~Fonso, G.~Di~Stefano, and A.~Navarra.
\newblock The geodesic mutual visibility problem for oblivious robots: The case
  of trees.
\newblock In {\em ICDCN '23: Proceedings of the 24th International Conference
  on Distributed Computing and Networking}, pages 150--159, 2023.
\newblock \href {https://doi.org/10.1145/3571306.3571401}
  {\path{doi:10.1145/3571306.3571401}}.

\bibitem{MV_10}
S.~Cicerone, G.~Di~Stefano, L.~Drožđek, J.~Hedžet, S.~Klavžar, and I.G.
  Yero.
\newblock Variety of mutual‑visibility problems in graphs.
\newblock {\em Theoretical Computer Science}, 974:114096, 2023.
\newblock \href {https://doi.org/10.1016/j.tcs.2023.114096}
  {\path{doi:10.1016/j.tcs.2023.114096}}.

\bibitem{MV_9}
S.~Cicerone, G.~Di~Stefano, and S.~Klavžar.
\newblock On the mutual-visibility in cartesian products and in triangle-free
  graphs.
\newblock {\em Applied Mathematics and Computation}, 438:127619, 2023.
\newblock \href {https://doi.org/10.1016/j.amc.2022.127619}
  {\path{doi:10.1016/j.amc.2022.127619}}.

\bibitem{MV_5}
S.~Cicerone, G.~Di~Stefano, S.~Klavžar, and I.G. Yero.
\newblock Mutual-visibility problems on graphs of diameter two.
\newblock {\em European Journal of Combinatorics}, 120:103995, 2024.
\newblock \href {https://doi.org/10.1016/j.ejc.2024.103995}
  {\path{doi:10.1016/j.ejc.2024.103995}}.

\bibitem{ConGim09}
j~Conde and J~Gimbert.
\newblock On the existence of graphs of diameter two and defect two.
\newblock {\em Discrete Mathematics}, 309(10):3166--3172, 2009.
\newblock \href {https://doi.org/10.1016/j.disc.2008.09.017}
  {\path{doi:10.1016/j.disc.2008.09.017}}.

\bibitem{Cvetkovski2012}
Z.~Cvetkovski.
\newblock {\em Convexity, Jensen's Inequality}.
\newblock Springer Berlin Heidelberg, 2012.
\newblock \href {https://doi.org/10.1007/978-3-642-23792-8_7}
  {\path{doi:10.1007/978-3-642-23792-8_7}}.

\bibitem{Dam1973}
R.~M. Damerell.
\newblock On moore graphs.
\newblock {\em Mathematical Proceedings of the Cambridge Philosophical
  Society}, 74:227--236, 1973.

\bibitem{robotics2}
G.A. Di~Luna, P.~Flocchini, S.G. Chaudhuri, F.~Poloni, N.~Santoro, and
  G.~Viglietta.
\newblock Mutual visibility by luminous robots without collisions.
\newblock {\em Information and Computation}, 254:392--418, 2017.

\bibitem{Stefano}
G.~Di~Stefano.
\newblock Mutual visibility in graphs.
\newblock {\em Applied Mathematics and Computation}, 419, 2022.
\newblock \href {https://doi.org/10.1016/j.amc.2021.126850}
  {\path{doi:10.1016/j.amc.2021.126850}}.

\bibitem{Elspas1964}
B.~Elspas.
\newblock Topological constraints on interconnection-limited logic.
\newblock In {\em Proceedings of the Fifth Symposium on Switching Circuit
  Theory and Logical Design}, IEEE S-164, pages 133--147, 1964.
\newblock \href {https://doi.org/10.1109/SWCT.1964.27}
  {\path{doi:10.1109/SWCT.1964.27}}.

\bibitem{ErdSieHof1980}
Paul Erd{\H{o}}s, Siemion Fajtlowicz, and Alan~J. Hoffman.
\newblock Maximum degree in graphs of diameter 2.
\newblock {\em Networks}, 10(1):87--90, 1980.
\newblock \href {https://doi.org/10.1002/net.3230100109}
  {\path{doi:10.1002/net.3230100109}}.

\bibitem{FerMilVil2011}
Ramiro Feria-Pur{\'o}n, Mirka Miller, and Guillermo Pineda-Villavicencio.
\newblock On graphs of defect at most 2.
\newblock {\em Discrete Applied Mathematics}, 159(13):1331--1344, 2011.
\newblock \href {https://doi.org/10.1016/j.dam.2011.04.018}
  {\path{doi:10.1016/j.dam.2011.04.018}}.

\bibitem{hoffman_moore}
A.~J. Hoffman and R.~R. Singleton.
\newblock On moore graphs with diameter 2 and 3.
\newblock {\em IBM Journal of Research and Development}, 4(5):497--504, 1960.

\bibitem{GP_2}
S.~Iršič, S.~Klavžar, G.~Rus, and J.~Tuite.
\newblock General position polynomials.
\newblock {\em Results in Mathematics}, 79, 2024.
\newblock \href {https://doi.org/10.1007/s00025-024-02133-3}
  {\path{doi:10.1007/s00025-024-02133-3}}.

\bibitem{Jorgensen1992}
Leif~K. J{\o}rgensen.
\newblock Diameters of cubic graphs.
\newblock {\em Discrete Applied Mathematics}, 37/38:347--351, 1992.
\newblock \href {https://doi.org/10.1016/0166-218X(92)90144-Y}
  {\path{doi:10.1016/0166-218X(92)90144-Y}}.

\bibitem{MV_1}
D.~Korže and A.~Vesel.
\newblock Mutual-visibility sets in cartesian products of paths and cycles.
\newblock {\em Results in Mathematics}, 79:Article 116, 2024.
\newblock \href {https://doi.org/10.1007/s00025-024-02139-x}
  {\path{doi:10.1007/s00025-024-02139-x}}.

\bibitem{MV_2}
D.~Korže and A.~Vesel.
\newblock Variety of mutual-visibility problems in hypercubes.
\newblock {\em Applied Mathematics and Computation}, 491:129218, 2025.
\newblock \href {https://doi.org/10.1016/j.amc.2024.129218}
  {\path{doi:10.1016/j.amc.2024.129218}}.

\bibitem{MillerSiran2013}
Mirka Miller and Jozef {\v{S}}ir{\'a}{\v{n}}.
\newblock Moore graphs and beyond: A survey of the degree/diameter problem.
\newblock {\em Electronic Journal of Combinatorics}, 20(2):\#DS14v2, 2013.

\bibitem{Geo_convex_1}
A~Rosenfeld and A.~Y. Wu.
\newblock Geodesic convexity in discrete spaces.
\newblock {\em Journal of Information Sciences}, 80:127--132, 1994.

\bibitem{MV_3}
J.~Tian and S.~Klavžar.
\newblock Graphs with total mutual-visibility number zero and total
  mutual-visibility in cartesian products.
\newblock {\em Discussiones Mathematicae Graph Theory}, 44:1277--1291, 2024.
\newblock \href {https://doi.org/10.7151/dmgt.2496}
  {\path{doi:10.7151/dmgt.2496}}.

\bibitem{VP_1}
K.~B. Tonny and M.~Shikhi.
\newblock On the visibility polynomial of graphs.
\newblock {\em IAENG International Journal of Applied Mathematics},
  56(4):1265--1274, 2026.
\newblock URL:
  \url{https://www.iaeng.org/IJAM/issues_v56/issue_4/IJAM_56_4_06.pdf}.

\bibitem{Geo_convex_2}
A.~Y. Wu and A.~Rosenfeld.
\newblock Geodesic visibility in graphs.
\newblock {\em Journal of Information Sciences}, 108:5--12, 1998.

\end{thebibliography}

\end{document}